\newcommand{\B}[1]{{\mathbf #1}}
\newtheorem*{theorem}{Theorem}
\newtheorem*{lemma}{Lemma}
\newtheorem{*problem}{Open Problem}
\theoremstyle{definition}
\theoremstyle{remark}
\newcommand{\OP}{\operatorname}
\begin{document}

\title{The autonomous norm on $\OP{Ham}\left( \B R^{2n} \right)$ is bounded}
\author{Michael Brandenbursky}
\address{Ben Gurion University, Israel}
\email{brandens@math.bgu.ac.il}
\author{Jarek K\k{e}dra}
\address{University of Aberdeen and University of Szczecin}
\email{kedra@abdn.ac.uk}


\begin{abstract}
We prove that the autonomous norm on the group of compactly
supported Hamiltonian diffeomorphisms of the standard $\B R^{2n}$
is bounded.
\end{abstract}

\maketitle 

Let $(M,\omega)$ be a symplectic manifold and let $\OP{Ham}(M,\omega)$ be the
group of compactly supported Hamiltonian diffeomorphisms of $(M,\omega)$.
Recall that a Hamiltonian diffeomorphism $f$ is a time-one map of the flow
generated by the vector field $X_{F_t}$ defined by $\omega(X_{F_t},-)=dF_t$.
Here $F\colon M\times S^1\to \B R$ is a smooth compactly supported function
and $F(x,t)=F_t(x)$ (see \cite[Section 5.1]{MR2002g:53157} for details). 
The function $F$ is called a Hamiltonian of $f$. If
$F$ does not depend on the second variable or is time independent then
$f$ is called autonomous. It is known that every Hamiltonian diffeomorphism
is a product of autonomous ones \cite{1602.03287v2}. 
The autonomous norm on $\OP{Ham}(M,\omega)$ is defined by:
$$
\|f\|=\min\{k\in \B N\,|\, f=a_1\cdots a_n, \text{ where $a_i$ is autonomous}\}.
$$
It is a conjugation invariant norm and is known to be
unbounded on the group of compactly supported Hamiltonian
diffeomorphisms of an oriented  surface of finite area
\cite{MR3044593,1405.7931,1602.03287v2,MR2104597}.  

This paper is concerned with the group $\OP{Ham}(\B R^{2n})$ of compactly
supported Hamiltonian diffeomorphism of the Euclidean space equipped with the
standard symplectic form. We prove the following result.  

\begin{theorem}\label{T:main}
The diameter of the autonomous norm on $\OP{Ham}\left( \B R^{2n} \right)$
is bounded above by $3$.
\end{theorem}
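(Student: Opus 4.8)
My plan is to exploit two features special to the standard $(\B R^{2n},\omega_0)$: the unlimited room at infinity, and the presence of conformal symplectic scalings. For $\lambda>0$ let $\theta_\lambda(z)=\lambda z$; then $\theta_\lambda^*\omega_0=\lambda^2\omega_0$, so $\theta_\lambda$ is not Hamiltonian, but conjugation by it preserves $\OP{Ham}(\B R^{2n})$ and, crucially, carries autonomous maps to autonomous maps (the flow of a time-independent $H$ goes to the flow of the time-independent $\lambda^{-2}\,H\circ\theta_\lambda^{-1}$). Since $\theta_\lambda$ with $\lambda<1$ shrinks supports, including their volume, I can fit infinitely many disjoint conjugated copies of a given $f$ into a bounded region, which is exactly what makes a Mazur-type swindle available while staying inside the compactly supported group.

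Concretely, fix $f$ supported in a ball $B\subset\{1<|z|<3\}$ and a scaling $\theta=\theta_\lambda$ with $\lambda<1/3$, so that the shells $B_k=\theta^k(B)$ are pairwise disjoint and accumulate only at the origin. Writing $f_k=\theta^k f\theta^{-k}$, supported in $B_k$, I set $G=\prod_{k\ge0}f_k$. The supports are disjoint and shrink geometrically to $0$, so $G$ is compactly supported; since $\theta$ conjugates $f_k$ to $f_{k+1}$ one gets the telescoping identity $\theta G\theta^{-1}=\prod_{k\ge1}f_k$, whence
\[
f=f_0=G\cdot\bigl(\theta G^{-1}\theta^{-1}\bigr).
\]
Both factors lie in $\OP{Ham}(\B R^{2n})$, the second because conjugation by the conformal map $\theta$ preserves Hamiltonicity. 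This already exhibits $f$ (up to conjugation, which the autonomous norm cannot see) as a product of two diffeomorphisms of a very constrained type.

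The real work—and the step I expect to be the main obstacle—is to make the swindle respect autonomy, since for a time-dependent $f$ the product $G$ is visibly not autonomous and conjugation invariance alone does not help. The plan is to build $G$ not as a blind product of conjugates of $f$ but as the time-one map of a single time-independent Hamiltonian, using the self-similar shell structure to encode the generating isotopy $\{\psi_t\}_{t\in[0,1]}$ of $f$ spatially: arrange the autonomous flow so that traversing the shell $B_k$ performs a frozen time-slice of $\{\psi_t\}$, so that descending through the nested shells toward the origin reconstitutes the full isotopy, hence $f$, in the limit. If $G$ is genuinely autonomous then so is $\theta G^{-1}\theta^{-1}$, and the displayed identity already gives $\|f\|\le2$. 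Two technical points then decide the constant. First, the infinite product must be $C^\infty$ at the accumulation point; since the higher derivatives of $\theta^kf\theta^{-k}$ grow like $\lambda^{-k}$, a damping must be built into the shell construction, and correcting for it costs one additional autonomous diffeomorphism supported near the origin—this is what I expect to turn the bound $2$ into the asserted $3$. Second, the spatial encoding of $\{\psi_t\}$ must be carried out smoothly across the shell boundaries $\partial B_k$. Granting these, the number of autonomous factors is uniform in $f$, which is precisely the boundedness claimed, with diameter at most $3$.
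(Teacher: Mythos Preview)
Your telescoping identity $f=G\,(\theta G^{-1}\theta^{-1})$ is correct, but the two steps you flag as ``technical'' are in fact the entire content, and neither is carried out. First, the infinite product $G=\prod_{k\ge0}\theta^k f\theta^{-k}$ is not a smooth diffeomorphism: writing $f_k(z)=\lambda^k f(\lambda^{-k}z)$ one has $\sup|f_k(z)-z|\asymp\lambda^k$ on the shell $B_k$, where $|z|\asymp\lambda^k$, so $|G(z)-z|/|z|^2\asymp\lambda^{-k}\to\infty$ and $G$ fails even to be twice differentiable at the origin. Thus the displayed identity does not take place in $\OP{Ham}(\B R^{2n})$ at all. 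Any damping that restores smoothness alters the $f_k$ and destroys the telescoping, and you give no mechanism by which one extra autonomous factor would undo that damage. Second, and more fundamentally, the ``spatial encoding'' proposal is not an argument. The only relation you have between $G$ and $f$ is the telescoping identity, which uses that $G$ is literally the product of the conjugates $f_k$; the moment you redefine $G$ as the time-one map of some other autonomous Hamiltonian built on the shells, there is no reason whatsoever that $G\,(\theta G^{-1}\theta^{-1})=f$. Nothing in the outline explains how an autonomous flow confined to nested shells can reproduce an arbitrary time-dependent $f$ on the outermost shell.

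The paper's argument is completely different and avoids all of this. It begins with a \emph{finite} autonomous factorization $f=a_1\cdots a_m$, takes an autonomous $h$ that displaces a common ball of support $m$ times, and uses a standard displacement trick to rewrite $f=[h,g]\cdot a_1^{h}a_2^{h^2}\cdots a_m^{h^m}$. The rightmost factor is autonomous because the conjugated Hamiltonians $F_i\circ h^{-i}$ have pairwise disjoint supports and can simply be summed; the commutator is $h$ times a conjugate of $h^{-1}$, hence two autonomous factors. No infinite products, no conformal maps, no smoothness issues.
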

\begin{proof}
Let $f\in \OP{Ham}\left( \B R^{2n} \right)$. Let $f=a_1\cdots a_m$, where
$a_i\in \OP{Ham}\left( \B R^{2n} \right)$ are autonomous diffeomorphisms
with compactly supported Hamiltonian functions $F_i\colon \B R^{2n}\to \B R$.
Let $B(r)\subset \B R^{2n}$ be an Euclidean ball of radius $r>0$, centered
at the origin and containing the union of the supports of the functions $F_i$.

\begin{lemma}\label{L:displacement}
There exists an autonomous diffeomorphism $h\in \OP{Ham}\left( \B R^{2n} \right)$
displacing the ball $B(r)$ $m$ times.
\end{lemma}
The statement of the lemma means that $h^i(B(r))\cap h^j(B(r))= \emptyset$
for $0\leq i\neq j\leq m$.
It follows from \cite[Lemma 2.6]{MR2509711} that there exists 
$g\in \OP{Ham}\left( \B R^{2n} \right)$ such that the following
equality holds:
$$
f = a_1\cdots a_m
= [h,g] a_1^h a_2^{h^2}\cdots a_m^{h^m},
$$
where $a_i^{h^i}=h^ia_ih^{-i}$ and $h$ is the diffeomorphism from the Lemma.
Observe that, since the supports of $F_i\circ h^i$ are pairwise disjoint for
$i\in\{1,\ldots,m\}$, we obtain that the composition $a_1^h a_2^{h^2}\cdots a_m^{h^m}$
is autonomous with the Hamiltonian function equal to 
$$
F_1\circ h+ F_2\circ h^2+\cdots + F_m\circ h^m.
$$ 
Since the commutator
$[h,g]=h\cdot h^{g}$ is a product of two autonomous diffeomorphisms
we obtain that $f$ is a product of three autonomous diffeomorphisms.
\end{proof}

\begin{proof}[Proof of the Lemma]
Let $H_1\colon \B R\to \B R$ be a smooth function satisfying the
following conditions:
\begin{enumerate}
\item $H_1(y)=0$ for $|y|>r+1$ 
\item $H_1'(y)=r$ for $|y|\leq r$.
\end{enumerate}

Let $H(x_1,y_1,\ldots,x_n,y_n) = H_1(y_1)$. We have that $dH = rdx_1$
and that the induced Hamiltonian vector field $X$ is equal to 
$r\frac{\partial}{\partial x_1}$. Thus the induced Hamiltonian diffeomorphism 
displaces the
ball $B(r)$ as many times as we like. Taking an appropriate cut off
function we obtain the required compactly supported diffeomorphism $h$.
\end{proof}

\subsection*{Remarks}
If $f$ in Theorem \ref{T:main} is contained in the kernel of the Calabi homomorphism
(see Section 8.B of \cite{MR1612569} for a definition) then the same argument
shows that it is a product of up to three autonomous diffeomorphisms with
trivial Calabi invariant.

It is known that the Hofer norm on $\OP{Ham}\left( \B R^{2n} \right)$ is
unbounded and stably bounded \cite{MR96g:58001}. The kernel of the Calabi homomorphism
does not admit nontrivial quasimorphisms, however, it is stably unbounded \cite{kawasaki}. 

It is not difficult to see that the diameter of the autonomous norm on $\OP{Ham}\left( \B R^{2n} \right)$ is at least $2$. 
To the best of our knowledge it is an open question whether there exists a Hamiltonian diffeomorphism of $ \B R^{2n}$
of autonomous norm equal to $3$.

\subsection*{Acknowledgements}
We thank the Center for Advanced Studies in Mathematics at Ben Gurion University
for supporting the visit of the second author at BGU.

\bibliography{bibliography}
\bibliographystyle{plain}

\end{document}